\newtheorem{theorem}{Theorem}
\newtheorem*{corollary}{Corollary}
\theoremstyle{definition}
\newcommand{\person}[1]{{#1}}
\newcommand{\N}{\mathds{N}}
\newcommand{\R}{\mathds{R}}
\newcommand{\defeq}{:=}
\newcommand{\ones}{\ensuremath{\mathbf{1}}}
\newcommand{\dout}{\ensuremath{d_\text{out}}}
\newcommand{\din}{\ensuremath{d_\text{in}}}
\DeclareMathOperator{\entrysum}{sum}
\newcommand{\colsum}{\ensuremath{c}}
\newcommand{\rowsum}{\ensuremath{r}}
\title{Chebyshev's Sum Inequality and the Zagreb Indices Inequality}
\author{Hanjo T{\"a}ubig%
  \thanks{%
  Computer Science Dept.,
  TU M\"unchen,
  D-85748 Garching,
  Germany,
  taeubig@in.tum.de}
}
\date{December 29, 2022}
\begin{document}

\maketitle

\begin{abstract}
  In a recent article, Nadeem and Siddique used Chebyshev's sum
  inequality to establish the Zagreb indices inequality $M_1/n\le
  M_2/m$ for undirected graphs in the case where the degree sequence
  $(d_i)$ and the degree-sum sequence $(S_i)$ are similarly ordered.
  We show that this is actually not a completely new result and we
  discuss several related results that also cover similar inequalities
  for directed graphs, as well as sum-symmetric matrices and Eulerian
  directed graphs.
\end{abstract}

\section{Introduction}

\subsection{Notation}

We consider $n\times n$ matrices, denoted by~$A$, with
entries~$a_{ij}$.  In particular, we look at the total sum of entries
denoted by $\entrysum(A)$, as well as the row and column sums of~$A$,
which are denoted by $\rowsum_i(A)$ and $\colsum_j(A)$, respectively.
If $A$ is clear from the context, we abbreviate this by $\rowsum_i$
and~$\colsum_j$.
%
%
For the matrix power $A^p$, $p\in\N$, we define the following
abbreviations: $a_{ij}^{[p]}\defeq (A^p)_{ij}$,
$\rowsum_i^{[p]}\defeq\rowsum_i(A^p)$, and
$\colsum_j^{[p]}\defeq\colsum_j(A^p)$.
We assume that $A^0=I$ is the identity matrix.

As a special case, we consider adjacency matrices of directed and
undirected (multi-)graphs $G=(V,E)$ with $n\defeq |V|$ vertices and
$m\defeq |E|$ edges.
The in-degree and the out-degree of a vertex $v\in V$ are denoted
by~$\din(v)$ and $\dout(v)$, respectively.  In undirected graphs, the
degree of a vertex $v\in V$ is denoted by~$d(v)$.
A \emph{walk} in a multigraph~$G=(V,E)$ is an alternating sequence
$(v_0,e_1,v_1,\ldots,v_{k-1},e_k,v_k)$
of vertices $v_i\in V$ and edges $e_i\in E$
where each edge~$e_i$ of the walk must connect vertex~$v_{i-1}$ to
vertex~$v_i$ in~$G$, that is, $e_i=(v_{i-1},v_i)$ for all
$i\in\{1,\ldots, k\}$.  Vertices and edges can be used repeatedly in
the same walk.  If the multigraph has no parallel edges, then the walks
could also be specified by the sequence of vertices
$(v_0,v_1,\ldots,v_{k-1},v_k)$ without the edges.  The \emph{length}
of a walk is the number of edge traversals.  That means, the walk
$(v_0,\ldots,v_k)$ consisting of $k+1$ vertices and $k$~edges is a
walk of length~$k$.  We call it a \emph{$k$-step walk}.
%
%
Let $s_k(v)$ denote the number of $k$-step walks starting at vertex
$v\in V$ and let $e_k(v)$ denote the number of $k$-step walks ending
at~$v$.  If $G$ is undirected, then we have $w_k(v)\defeq s_k(v) =
e_k(v)$.  The total number of $k$-step walks is denoted by~$w_k$.
For walks of length~$0$, we have $s_0(v)=e_0(v)=1$ for each vertex~$v$
and $w_0=n$.
For walks of length~$1$, we have $s_1(v)=\dout(v)$ and
$e_1(v)=\din(v)$, i.e., $w_1(v)=d(v)$ for undirected graphs.  This
implies $w_1=\sum_{v\in V}\dout(v)=\sum_{v\in V}\din(v)=m$ for
directed graphs.  For undirected graphs, we have $w_1=\sum_{v\in
  V}d(v)=2m$ by the handshake lemma.

\subsection{\person{Chebyshev}'s Sum Inequality}
Two $n$-tuples $(a_1,\ldots,a_n)$ and $(b_1,\ldots,b_n)$ of real
numbers are called \emph{similarly ordered}%
\index{similarly ordered} if $(a_i-a_k)(b_i-b_k)\ge 0$ for all
$i,k\in[n]$.  They are called \emph{conversely ordered}
(also \emph{oppositely ordered}, see~\cite{hlp-i-59})
if $(a_i-a_k)(b_i-b_k)\le 0$ for all $i,k=1,\ldots,n$.
The term similarly ordered is equivalent to the requirement that there
exists a permutation that transforms both tuples into nonincreasing
sequences.  In the same line, two tuples are conversely ordered if and
only if there is a permutation that transforms one of the tuples into
a nonincreasing and the other tuple into a nondecreasing sequence.
Below, we will use the same notation for $n$-dimensional real vectors
$a,b\in\R^n$.

The following inequality was published by Chebyshev~\cite{c-sgubifi-1883,mv-hvgciqsp-74}.
\begin{theorem}[Chebyshev]%
  \label{thm:Chebyshev}
  Let $f,g: [a,b]\mapsto \R$ be integrable functions, both
  non-decreasing or both non-increasing. Furthermore, let
  $p:[a,b]\mapsto\R_{\ge 0}$ be an integrable nonnegative function.
  Then
\[
\int_a^b p(x) \,dx
\int_a^b p(x) f(x) g(x) \,dx
\ge
\int_a^b p(x) f(x) \,dx
\int_a^b p(x) g(x) \,dx\enspace.
\]
If one of the functions $f$ or $g$ is non-decreasing and the other
non-increasing, then the sign of inequality is reversed.
\end{theorem}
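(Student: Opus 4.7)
The plan is to prove the inequality via the classical symmetrization trick, reducing everything to a manifestly (non)negative double integral. Concretely, I would show the identity
\[
\int_a^b p(x)\,dx \int_a^b p(x) f(x) g(x)\,dx - \int_a^b p(x) f(x)\,dx \int_a^b p(x) g(x)\,dx
= \frac{1}{2}\int_a^b\!\!\int_a^b p(x)p(y)\bigl(f(x)-f(y)\bigr)\bigl(g(x)-g(y)\bigr)\,dx\,dy,
\]
and then read off the sign of the right-hand side from the assumed monotonicity of $f$ and $g$.

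First I would verify the identity by expanding the product $\bigl(f(x)-f(y)\bigr)\bigl(g(x)-g(y)\bigr)$ inside the double integral into four terms, splitting the iterated integral using Fubini (this is legal because $f$, $g$, $p$ are integrable on $[a,b]$, and boundedness of $[a,b]$ together with monotonicity of $f,g$ keep the products integrable). The cross terms $-f(x)g(y)$ and $-f(y)g(x)$ each integrate to $-\int p f \cdot \int p g$, while the diagonal terms $f(x)g(x)$ and $f(y)g(y)$ each contribute $\int p \cdot \int pfg$; after dividing by two, the claimed identity drops out.

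Next I would apply the hypothesis. If $f$ and $g$ are both non-decreasing (or both non-increasing), then for every pair $(x,y)\in[a,b]^2$ the factors $f(x)-f(y)$ and $g(x)-g(y)$ share a common sign, so their product is nonnegative; combined with $p(x)p(y)\ge 0$ almost everywhere, the integrand on the right is nonnegative, which yields the desired inequality. If instead one function is non-decreasing and the other non-increasing, the two differences have opposite signs pointwise, so the integrand is nonpointwise, and the inequality reverses.

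The only real subtlety is the integrability bookkeeping needed to justify Fubini and the expansion, but since $[a,b]$ is a bounded interval and monotone functions on such an interval are bounded (hence in $L^\infty$), this reduces to the standard observation that products and sums of bounded integrable functions remain integrable; I would state this briefly rather than dwell on it. Everything else in the argument is a direct symbolic expansion followed by a one-line sign check.
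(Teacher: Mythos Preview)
Your argument is the standard symmetrization proof of Chebyshev's integral inequality and is correct. One small slip: in your final paragraph you wrote ``nonpointwise'' where you clearly meant ``nonpositive''.

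As for comparison with the paper: there is nothing to compare against. The paper does not prove Theorem~\ref{thm:Chebyshev}; it merely states it as a classical result with a citation to Chebyshev~\cite{c-sgubifi-1883} and to the survey of Mitrinovi\'c and Vasi\'c~\cite{mv-hvgciqsp-74}, and then uses it as a black box. So you are supplying a proof where the paper supplies only a reference. Your approach---the double-integral identity
\[
\frac{1}{2}\int_a^b\!\!\int_a^b p(x)p(y)\bigl(f(x)-f(y)\bigr)\bigl(g(x)-g(y)\bigr)\,dx\,dy
\]
followed by a sign check---is exactly the textbook route, and your integrability remarks (monotone on a compact interval $\Rightarrow$ bounded, hence the products stay integrable and Fubini applies) are adequate for the level of rigor here.
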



The discrete analog is the following statement.

\begin{corollary}%
  For similarly ordered 
  vectors $a\in\R^n$ and $b\in\R^n$
  and any
  nonnegative 
  vector $p\in\R_{\ge0}^n$, we have
  \[
  \left(\sum_{i=1}^n p_i a_i\right)
  \left(\sum_{i=1}^n p_i b_i\right)
  \le
  \left(\sum_{i=1}^n p_i\right)
  \left(\sum_{i=1}^n p_i a_i b_i\right)\enspace.
  \]
  %
  The inequality is reversed if 
  $a$ and~$b$ are conversely ordered.
\end{corollary}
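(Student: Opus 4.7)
The plan is to establish the inequality through the classical algebraic identity
\[
\left(\sum_{i=1}^n p_i\right)\left(\sum_{i=1}^n p_i a_i b_i\right) - \left(\sum_{i=1}^n p_i a_i\right)\left(\sum_{i=1}^n p_i b_i\right) = \frac{1}{2}\sum_{i=1}^n\sum_{j=1}^n p_i p_j (a_i - a_j)(b_i - b_j),
\]
which converts the desired inequality into a sign assertion about a symmetric double sum. This is the most direct route and avoids invoking the integral Theorem~\ref{thm:Chebyshev} altogether.

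First I would verify the identity by expanding each summand on the right as $p_i p_j (a_i b_i + a_j b_j - a_i b_j - a_j b_i)$. Summing over $i$ and $j$, the two diagonal contributions each give $(\sum_i p_i)(\sum_j p_j a_j b_j)$ by the $(i,j)\leftrightarrow(j,i)$ symmetry, while the two cross contributions each give $(\sum_i p_i a_i)(\sum_j p_j b_j)$. Dividing by two yields exactly the claimed identity.

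Once the identity is in hand, the inequality is immediate. Since $a$ and $b$ are similarly ordered, every factor $(a_i - a_j)(b_i - b_j)$ is nonnegative; since $p$ is componentwise nonnegative, every product $p_i p_j$ is nonnegative as well; hence every summand on the right is nonnegative, which gives the stated inequality. For the conversely ordered case, every factor $(a_i - a_j)(b_i - b_j)$ is nonpositive, and the same identity therefore yields the reversed inequality.

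I do not anticipate a serious obstacle, since the proof is purely algebraic bookkeeping; the key insight is simply writing the difference of the two bilinear expressions as a symmetric double sum whose sign can be read off term by term. If one preferred a deductive derivation from the integral version instead, one could obtain the corollary from Theorem~\ref{thm:Chebyshev} by taking piecewise constant functions on $[0,n]$ with $f(x)=a_{\lceil x\rceil}$, $g(x)=b_{\lceil x\rceil}$, and $p(x)=p_{\lceil x\rceil}$, but the algebraic identity above is self-contained and strictly shorter, so that route would be superfluous.
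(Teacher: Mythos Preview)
Your argument via the symmetrized identity is correct and is the standard self-contained proof of the discrete Chebyshev sum inequality; the verification of the identity and the sign analysis are both sound.

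The paper itself does not actually prove this corollary: it merely introduces it as ``the discrete analog'' of the integral Theorem~\ref{thm:Chebyshev}, so the implicit route is precisely the discretization you sketch at the end. Your primary approach is genuinely different in that it bypasses the integral version entirely and works directly with the double-sum identity. What this buys you is a fully elementary, self-contained argument with no appeal to integration; what the paper's implicit route buys is economy of presentation (nothing to prove once the integral version is accepted). One small caveat about your alternative discretization sketch: Theorem~\ref{thm:Chebyshev} requires $f$ and $g$ to be monotone, whereas similarly ordered vectors need not be sorted in the given index order; you would first have to apply the common permutation that sorts both $a$ and $b$ (which leaves all four sums unchanged) before defining the step functions. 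This is a trivial fix and does not affect your main argument.
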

If $p\in\R_{\ge0}^n$ is nonzero, this corresponds to the following
weighted arithmetic means relation:
\[
\frac{\sum_{i=1}^n p_i a_i}{\sum_{i=1}^n p_i}\cdot\frac{\sum_{i=1}^n p_i b_i}{\sum_{i=1}^n p_i}
\le
\frac{\sum_{i=1}^n p_i a_i b_i}{\sum_{i=1}^n p_i}\enspace.
\]
A direct consequence is the following.
%
%
Given $a,b\in\R^n$ and $r\in\R$, suppose that $a_i^r$ and $b_i^r$ are
defined within~$\R$ for all $i\in[n]$ and that the corresponding
tuples $(a_1^r,\ldots,a_n^r)$ and $(b_1^r,\ldots,b_n^r)$ are similarly
ordered.  Then we have
%
\[
\frac{\sum_{i=1}^n p_i a_i^r}{\sum_{i=1}^n p_i}\cdot\frac{\sum_{i=1}^n p_i b_i^r}{\sum_{i=1}^n p_i}
\le
\frac{\sum_{i=1}^n p_i (a_i b_i)^r}{\sum_{i=1}^n p_i}\enspace.
\]
%
%
One particular case where such inequalities can be obtained occurs for
\emph{arbitrary} exponents~$r$ and \emph{nonnegative} 
vectors $a$ and~$b$ that are similarly or conversely ordered.  Another
special case is for \emph{odd integer} exponents~$r$ (or their reciprocals) and
\emph{arbitrary} real 
vectors $a$ and~$b$.

\begin{corollary}%
  If the vectors $a\in\R^n$ and $b\in\R^n$ are similarly ordered, then
  \[
  \left(\sum_{i=1}^n a_i\right) \left(\sum_{i=1}^n b_i\right)
  \le
  n \sum_{i=1}^n a_i b_i\enspace.
  \]
  The inequality is reversed if $a$ and~$b$ are conversely ordered.
  %
\end{corollary}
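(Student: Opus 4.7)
The plan is to derive this statement as the unweighted specialization of the preceding weighted corollary. Specifically, I would apply that corollary with the weight vector $p=\ones\in\R_{\ge0}^n$, i.e., $p_i=1$ for every $i\in[n]$. This is a valid nonnegative weight, so the hypotheses of the weighted corollary are met whenever $a$ and $b$ are similarly ordered; no additional ordering assumption on $p$ is required.

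Under this choice, each of the four weighted sums in the previous corollary reduces to an unweighted one: $\sum_i p_i = n$, $\sum_i p_i a_i = \sum_i a_i$, $\sum_i p_i b_i = \sum_i b_i$, and $\sum_i p_i a_i b_i = \sum_i a_i b_i$. Substituting into the weighted inequality
\[
\left(\sum_i p_i a_i\right)\left(\sum_i p_i b_i\right)\le \left(\sum_i p_i\right)\left(\sum_i p_i a_i b_i\right)
\]
immediately yields $\bigl(\sum_i a_i\bigr)\bigl(\sum_i b_i\bigr)\le n\sum_i a_i b_i$, which is precisely the claimed bound.

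For the conversely ordered case, the same substitution into the reversed form of the weighted corollary gives the reversed inequality. There is no real obstacle: the result is essentially just the specialization of the previously proved weighted Chebyshev inequality to uniform weights, so the only work is to verify that the constant weight vector satisfies the stated hypotheses and to read off the resulting simplification.
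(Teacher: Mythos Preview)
Your proposal is correct and matches the paper's intended derivation: the statement is presented there as an immediate corollary of the preceding weighted form, and specializing to the constant weight vector $p_i=1$ is precisely how it follows. There is nothing to add.
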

For $n>0$, this is the same as the following relation between
arithmetic means:
\[
\frac{\sum_{i=1}^n a_i}{n}\cdot\frac{\sum_{i=1}^n b_i}{n}
\le
\frac{\sum_{i=1}^n a_i b_i}{n}\enspace.
\]
All those variants are called \person{Chebyshev}'s (sum) inequality.

\section{Zagreb Indices and Walks}

\subsection{The Zagreb Indices Inequality}
The first and the second \emph{Zagreb [group] index} for an undirected
graph $G=(V,E)$ are defined as%
\footnote{The first explicit definition of those indices appeared in
  the paper by Gutman et al.~\cite{grtw-gtmoap-75}.  Erroneously, it
  referred to the earlier article by Gutman and
  Trinajsti\'c~\cite{gt-gtmotpeeah-72} as the point where these
  measures where introduced.  Actually, this is not true.  This
  historical development was clarified recently by
  Gutman~\cite{g-otdbti-14}.}
\[
M_1=\sum_{v\in V}d_v^2\qquad\text{ and }\qquad
M_2=\sum_{\{x,y\}\in E}d_x d_y\enspace.
\]

Assume that $V=\{v_1,\ldots, v_n\}$ and that the vertex degrees are
abbreviated by $d_i=d(v_i)$.
Recently, an article was published by Nadeem and
Siddique~\cite{ns-mzii-22} that contains the following statement
concerning the degree-sums $S_i\defeq\sum_{v_j\in N(v_i)}d(v_j)$,
where $N(v_i)\defeq\{v_j\in V\mid\{v_i,v_j\}\in E\}$ is the set of
neighbors of~$v_i$.

\begin{theorem}
  \label{thm:NadeemSiddique}
  Let $G$ be a connected graph having degree sequence $(d_i)$,
  degree-sum sequence $(S_i)$, order~$n$ and size~$m$. If $(d_i)$ and
  $(S_i)$ are similarly ordered, then
  \[ \frac{M_1(G)}{n} \le \frac{M_2(G)}{m}\enspace. \]
  Equality is attained if and only if $G$ is a regular or a complete
  bipartite graph.
\end{theorem}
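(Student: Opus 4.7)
The plan is to express the quantities $m$, $M_1$, and $M_2$ as simple sums over the sequences $(d_i)$ and $(S_i)$, and then apply the unweighted form of \person{Chebyshev}'s sum inequality (the second Corollary above) to the pair $(d_i),(S_i)$. Concretely, the first step is to record three identities obtained by swapping the order of summation and counting each edge's contribution at both of its endpoints:
\begin{align*}
\sum_{i=1}^n d_i &= 2m, \\
\sum_{i=1}^n S_i &= \sum_{\{v_i,v_j\}\in E}(d_i+d_j) = \sum_{i=1}^n d_i^2 = M_1, \\
\sum_{i=1}^n d_i S_i &= \sum_{\{v_i,v_j\}\in E}(d_i d_j + d_j d_i) = 2M_2.
\end{align*}
The first is the handshake lemma; the second holds because each edge $\{v_i,v_j\}$ contributes $d_j$ to $S_i$ and $d_i$ to $S_j$, and grouping these contributions by the common endpoint gives $\sum_i d_i\cdot d_i$; the factor $2$ in the third identity appears for the analogous reason.

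With these identities in hand, \person{Chebyshev}'s sum inequality applied to the similarly ordered sequences $(d_i)$ and $(S_i)$ gives
\[
\left(\sum_{i=1}^n d_i\right)\left(\sum_{i=1}^n S_i\right) \;\le\; n\sum_{i=1}^n d_i S_i,
\]
which, after substitution, becomes $2m \cdot M_1 \le n \cdot 2M_2$, that is, $M_1/n \le M_2/m$, as required. Thus the inequality itself is an almost immediate consequence of \person{Chebyshev} once the three combinatorial identities are in place, and this part I expect to be routine.

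The step I expect to be the most delicate is the equality analysis. The equality condition in \person{Chebyshev}'s inequality is $(d_i-d_j)(S_i-S_j)=0$ for every pair $i,j$, which forces at least one of the two sequences to be constant. A constant degree sequence is precisely the condition that $G$ is regular, which yields one half of the claimed characterization, and one easily verifies that regular graphs as well as complete bipartite graphs $K_{p,q}$ (where every vertex satisfies $S_i=pq$) attain equality. The hard part is the reverse implication in the case where $(S_i)$ is constant but $(d_i)$ is not: one must show that a connected graph with constant neighbour-degree sum is forced to be complete bipartite. Since connected biregular bipartite graphs (with part-degrees $a\neq b$) also make $(S_i)$ constant but need not be complete, this step is not entirely automatic from \person{Chebyshev} alone and would require either an additional graph-theoretic argument exploiting connectedness and the similar-ordering hypothesis, or an acknowledgement that the equality-class stated in the theorem is slightly narrower than what \person{Chebyshev}'s equality actually delivers.
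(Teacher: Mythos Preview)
Your derivation of the inequality is exactly the argument the paper uses: the paper recasts Theorem~\ref{thm:NadeemSiddique} in walk-count form ($d_i=w_1(v_i)$, $S_i=w_2(v_i)$) and obtains it as the $k=1$, $\ell=2$ instance of Theorem~\ref{thm:Chebyshev_for_rowcolsums}, whose proof is nothing more than Chebyshev applied to $\colsum_i^{[1]}=d_i$ and $\rowsum_i^{[2]}=S_i$ together with the identity $\entrysum(A^{k+\ell})=\sum_i \colsum_i^{[k]}\rowsum_i^{[\ell]}$. Unpacking that identity for $k=1$, $\ell=2$ in the symmetric case gives precisely your three formulas $\sum d_i=2m$, $\sum S_i=M_1$, $\sum d_iS_i=2M_2$.

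On the equality clause the paper offers no argument of its own; it merely quotes the statement from Nadeem and Siddique. Your reservation is not only justified but decisive: any connected biregular bipartite graph with part-degrees $a\neq b$ has $S_i\equiv ab$ constant, trivially satisfies the similar-ordering hypothesis, and attains $M_1/n=M_2/m=ab$, yet need not be complete bipartite (for instance a connected $(2,3)$-biregular bipartite graph on $6+4$ vertices). So the ``only if'' direction as stated is too narrow; the equality condition that Chebyshev actually delivers is ``$(d_i)$ constant or $(S_i)$ constant'', and the latter class strictly contains the non-regular complete bipartite graphs among connected graphs. This is a defect in the cited theorem rather than in your proof.
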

They also remark for the part with the sufficient condition, that the
Zagreb indices inequality holds for both, connected and non-connected
graphs.

That means, this result uses \person{Chebyshev}'s sum inequality to
establish the Zagreb indices inequality in the case where the
sequences $(d_i)$ and $(S_i)$ are similarly ordered.

\subsection{The Number of Walks Form}
For a long time during the research on topological indices in chemical
graph theory, it has been overlooked that two of the most popular
descriptors were in fact just special cases of measures defined by the
number of walks.
Only after decades, it was observed by Nikolić et
al.~\cite{nkmt-ziya-03} 
and Braun et
al.~\cite{bkmr-smdeziwc-05} 
that $M_1=w_2$ (which is also implicitly contained in the paper by
Gutman et al.~\cite{grr-wmg-01}, 
but not explicitly stated there) and that $M_2=w_3/2$.

Together with $n=w_0$ and $m=w_1/2$, the Zagreb indices inequality can
be rephrased as
\[ w_2 / w_0 \le w_3 / w_1\enspace. \]

In the same line, we observe that the degree-sum $S_i$ equals the
number of $2$-step walks starting at~$v_i$, i.e., $S_i=w_2(v_i)$.  And
as already noted, we have $d_i=w_1(v_i)$.

In this respect, Theorem~\ref{thm:NadeemSiddique} can also be
expressed as a statement about walks:
\begin{theorem}
  Let $G$ be a graph having number of $1$-step walks sequence
  $(w_1(v_i))$ and number of $2$-step walks sequence $(w_2(v_i))$.
  %
  If $(w_1(v_i))$ and $(w_2(v_i))$ are similarly ordered, then
  \[ w_2 / w_0 \le w_3 / w_1\enspace. \]
\end{theorem}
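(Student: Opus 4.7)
The plan is to reduce the stated inequality to a direct application of \person{Chebyshev}'s sum inequality (the second corollary in the unweighted form) to the vectors $a = (w_1(v_i))_{i=1}^n$ and $b = (w_2(v_i))_{i=1}^n$, which are similarly ordered by hypothesis. Observe that, since $w_0 = n > 0$ and $w_1 = 2m$ (assuming the edge set is nonempty, otherwise the claim is trivial), the inequality $w_2/w_0 \le w_3/w_1$ is equivalent to
\[
w_1 \cdot w_2 \le w_0 \cdot w_3.
\]
Writing $w_1 = \sum_{i=1}^n w_1(v_i)$ and $w_2 = \sum_{i=1}^n w_2(v_i)$, the left-hand side is $(\sum_i w_1(v_i))(\sum_i w_2(v_i))$, and the right-hand side has the shape $n \sum_i w_1(v_i) w_2(v_i)$ provided we can establish the key identity
\[
w_3 = \sum_{i=1}^n w_1(v_i)\, w_2(v_i).
\]

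The identity is the only genuine content beyond \person{Chebyshev}. I would prove it by a bijective/counting argument: every $3$-step walk $(v_0, v_1, v_2, v_3)$ in the undirected graph $G$ can be uniquely decomposed at its second vertex $v_1$ into a choice of $v_0$ as a neighbor of $v_1$ (contributing a factor of $d(v_1) = w_1(v_1)$) together with a $2$-step walk $(v_1, v_2, v_3)$ starting at $v_1$ (contributing $w_2(v_1)$). Since $G$ is undirected, reversing the $v_0 \to v_1$ step is legitimate and every triple $(v_0, v_1 v_2 v_3)$ arises exactly once this way. Summing over the choice of the pivot vertex $v_1 \in V$ yields the stated identity.

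Once the identity is in hand, applying the unweighted form of \person{Chebyshev}'s sum inequality with $a_i = w_1(v_i)$ and $b_i = w_2(v_i)$, which are similarly ordered by assumption, gives
\[
\left(\sum_{i=1}^n w_1(v_i)\right)\left(\sum_{i=1}^n w_2(v_i)\right) \le n \sum_{i=1}^n w_1(v_i)\, w_2(v_i),
\]
which is exactly $w_1 \cdot w_2 \le w_0 \cdot w_3$, and dividing by $w_0 w_1 > 0$ gives the claim.

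The main obstacle, if one can call it that, is just recognizing the decomposition identity $w_3 = \sum_i w_1(v_i) w_2(v_i)$; the rest is one invocation of \person{Chebyshev}. I would also note in passing that this makes transparent why the original formulation in terms of $M_1$, $M_2$, $n$, $m$ is the same statement, since $M_1 = w_2$, $2M_2 = w_3$, $n = w_0$, $2m = w_1$, and the factors of $2$ cancel.
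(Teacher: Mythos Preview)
Your argument is correct and matches the paper's approach: the paper deduces this theorem as the special case $k=1$, $\ell=2$ of Theorem~\ref{thm:Chebyshev_for_rowcolsums}, whose proof is exactly the identity $\entrysum(A^{k+\ell})=\sum_i \colsum_i^{[k]}\rowsum_i^{[\ell]}$ followed by \person{Chebyshev}'s sum inequality. The only cosmetic difference is that you establish the key identity $w_3=\sum_i w_1(v_i)\,w_2(v_i)$ by a direct combinatorial decomposition of $3$-step walks, whereas the paper obtains the same identity algebraically via $\ones^T A^{k+\ell}\ones=(\ones^T A^k)(A^\ell\ones)$; for the symmetric adjacency matrix these are the same computation.
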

Actually, this is not a new result.  It is a special case of a more
general theorem by Täubig \cite{t-impnwg-15,t-miis-17}, see the
corollary of Theorem~\ref{thm:Chebyshev_for_rowcolsums} in the next
section.
Note also that a related observation corresponding to the Zagreb
indices inequality has already been made by London~\cite{l-tiinsm-66}
in the more general case of entry sums of nonnegative symmetric
matrices.

The Zagreb indices inequality has been shown to hold for several
special graph classes, such as
trees~\cite{vg-czifam-07,acs-nziitug-12}, chemical
graphs~\cite{hv-czi-07}, or subdivision
graphs~\cite{is-czi-09,t-inwsg-16}, while it does not hold for
connected graphs in general~\cite{lmsm-iwg-84,hv-czi-07} or for
bipartite graphs, not even for forests (see Chapter~5 of
\cite{t-impnwg-15} or~\cite{t-miis-17}).

\section{Applying Chebyshev's Sum Inequality to Directed Graphs}%
\label{sec:ChebyshevDirected}

In order to obtain inequalities for the number of walks in directed
graphs and for entry sums in nonsymmetric matrices, it is sometimes
possible to apply \person{Chebyshev}'s sum inequality (see
Theorem~\ref{thm:Chebyshev}).
In those cases we are able to obtain statements by elementary
proofs without using any eigenvalues.
%
%
%
%
\begin{theorem}%
  \label{thm:Chebyshev_for_rowcolsums}
  For any matrix~$A$ such that the column sums of~$A^k$ and the row
  sums of~$A^\ell$ (i.e., $\colsum^{[k]}$ and $\rowsum^{[\ell]}$)
  are similarly ordered, we have
  \[
  \entrysum\left(A^k\right)\cdot\entrysum\left(A^\ell\right)
  \le n\cdot\entrysum\left(A^{k+\ell}\right)\enspace.
  \]
  The inequality is reversed if $\colsum^{[k]}$ and $\rowsum^{[\ell]}$
  are conversely ordered.
\end{theorem}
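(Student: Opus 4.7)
The plan is to reduce the claim directly to the unweighted discrete Chebyshev inequality (the second corollary in the excerpt) applied to the vectors $\colsum^{[k]}$ and $\rowsum^{[\ell]}$. The bridge is the elementary matrix identity
\[
\entrysum(A^{k+\ell}) \;=\; \sum_{t=1}^{n} \colsum^{[k]}_t \cdot \rowsum^{[\ell]}_t,
\]
which I would derive first. Starting from $(A^{k+\ell})_{ij} = \sum_t a^{[k]}_{it} a^{[\ell]}_{tj}$, summing over $i$ and $j$ and swapping the order of summation factors the double sum as
\[
\entrysum(A^{k+\ell})
= \sum_{t} \Bigl(\sum_{i} a^{[k]}_{it}\Bigr)\Bigl(\sum_{j} a^{[\ell]}_{tj}\Bigr)
= \sum_{t} \colsum^{[k]}_t\,\rowsum^{[\ell]}_t.
\]

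Next I would observe the two easy identities $\entrysum(A^k)=\sum_t \colsum^{[k]}_t$ and $\entrysum(A^\ell)=\sum_t \rowsum^{[\ell]}_t$; the total entry sum of a matrix equals either the sum of its row sums or the sum of its column sums. With these in hand, the theorem statement becomes
\[
\Bigl(\sum_{t=1}^{n} \colsum^{[k]}_t\Bigr)\Bigl(\sum_{t=1}^{n} \rowsum^{[\ell]}_t\Bigr)
\;\le\; n\sum_{t=1}^{n} \colsum^{[k]}_t\,\rowsum^{[\ell]}_t,
\]
which is exactly the unweighted Chebyshev sum inequality (the second corollary, with $a_t=\colsum^{[k]}_t$ and $b_t=\rowsum^{[\ell]}_t$) under the assumed similar ordering of these two vectors. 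For the converse-ordering case I would invoke the reverse direction of the same corollary.

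There is essentially no obstacle: the proof is a one-line algebraic rearrangement followed by one application of the corollary stated just above. The only point to be careful about is making sure the factorization of $\entrysum(A^{k+\ell})$ is written clearly, since it is the crucial ingredient that lets the two sequences $\colsum^{[k]}$ and $\rowsum^{[\ell]}$ meet on the same index $t$; everything else is bookkeeping.
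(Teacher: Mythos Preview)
Your proposal is correct and essentially identical to the paper's own proof: the paper derives the same factorization $\entrysum(A^{k+\ell})=\sum_i \colsum^{[k]}_i\,\rowsum^{[\ell]}_i$ (written there as $\ones_n^T A^k\cdot A^\ell\ones_n$) and then applies Chebyshev's sum inequality in exactly the way you describe. The only cosmetic difference is that the paper uses the all-ones vector notation rather than your entrywise summation, but the argument is the same.
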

%
%
\begin{proof}
  For every $n\times n$ matrix~$A$, we have
  \[
  \entrysum\left(A^{k+\ell}\right)
  = \ones_n^T \left(A^k A^\ell\right) \ones_n
  = \left(\ones_n^T A^k\right) \left(A^\ell \ones_n\right)
  = \sum_{i\in [n]} \colsum^{[k]}_i\cdot\rowsum^{[\ell]}_i\enspace.
  \]
  The inequality is now a direct consequence of
  \person{Chebyshev}'s inequality (see Theorem~\ref{thm:Chebyshev}):
  \begin{eqnarray*}
  \entrysum\left(A^k\right)\cdot\entrysum\left(A^\ell\right)
  & = &
  \left(\sum_{i=1}^n \colsum_i^{[k]}\right)
  \left(\sum_{i=1}^n \rowsum_i^{[\ell]}\right)\\
  & \le &
  n \sum_{i=1}^n \colsum_i^{[k]}\rowsum_i^{[\ell]}
  = n\cdot\entrysum\left(A^{k+\ell}\right)\enspace.
  \end{eqnarray*}
\end{proof}
%
%
Note that for all Hermitian matrices~$A$ and integers $k$, $\ell$
where $k+\ell$ is an even number,
Theorem~\ref{thm:Chebyshev_for_rowcolsums} holds in general without
the ordering assumption.  Those inequalities and related results for
real symmetric matrices and walks in undirected graphs were discussed
in \cite{twkhm-inwg-13} and \cite{tw-mpinwg-14}.

For the special case of adjacency matrices,
Theorem~\ref{thm:Chebyshev_for_rowcolsums} translates to the following
statement about the number of walks in digraphs.
\begin{corollary}
  \label{cor:Chebyshev_graph_walks}
  For every directed graph $G=(V,E)$ where the vectors of walk numbers
  $e_k(v)$ and $s_\ell(v)$, $v\in V$, are similarly ordered, we have
  \[ w_k\cdot w_\ell\le n\cdot w_{k+\ell}\enspace. \]
\end{corollary}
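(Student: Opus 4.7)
The plan is to derive this as an immediate specialization of Theorem~\ref{thm:Chebyshev_for_rowcolsums} by taking $A$ to be the adjacency matrix of the digraph $G$ and translating the matrix quantities into walk counts.

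First I would recall the standard combinatorial interpretation of powers of the adjacency matrix: $(A^p)_{ij}$ equals the number of $p$-step walks from $v_i$ to $v_j$ in $G$. Summing over $j$ gives $\rowsum_i^{[p]} = s_p(v_i)$, the number of $p$-step walks starting at $v_i$; summing over $i$ gives $\colsum_j^{[p]} = e_p(v_j)$, the number of $p$-step walks ending at $v_j$; and summing over all entries gives $\entrysum(A^p) = w_p$. These identifications are routine and can be stated in one or two lines, possibly by induction on $p$ together with the recursion $s_p(v_i) = \sum_{(v_i,v_j)\in E} s_{p-1}(v_j)$ and its analog for $e_p$.

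With this dictionary in hand, the hypothesis that the vectors $(e_k(v))_{v\in V}$ and $(s_\ell(v))_{v\in V}$ are similarly ordered is precisely the hypothesis that $\colsum^{[k]}$ and $\rowsum^{[\ell]}$ are similarly ordered for $A$. Applying Theorem~\ref{thm:Chebyshev_for_rowcolsums} then yields
\[
w_k\cdot w_\ell
= \entrysum(A^k)\cdot\entrysum(A^\ell)
\le n\cdot \entrysum(A^{k+\ell})
= n\cdot w_{k+\ell},
\]
which is the claimed inequality.

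There is no real obstacle here; the only thing to be careful about is matching the indexing convention (making sure that $e_k$ corresponds to column sums and $s_\ell$ to row sums, in agreement with the directions used in the earlier theorem). Once that bookkeeping is settled, the corollary is just a translation of the matrix inequality into graph-theoretic language.
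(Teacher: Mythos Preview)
Your proposal is correct and matches the paper's approach exactly: the paper simply states that the corollary is the specialization of Theorem~\ref{thm:Chebyshev_for_rowcolsums} to adjacency matrices, without writing out any details. Your dictionary $\rowsum_i^{[p]}=s_p(v_i)$, $\colsum_j^{[p]}=e_p(v_j)$, $\entrysum(A^p)=w_p$ is precisely the intended translation, and your care about matching $e_k$ to column sums and $s_\ell$ to row sums is the only point worth checking.
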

%
Obviously, this inequality is applicable to undirected graphs if
$w_k(v_i)$ and $w_\ell(v_i)$, $i\in [n]$, are similarly ordered
sequences (here, we have $w_k(v_i)=s_k(v_i)=e_k(v_i)$ for all
$i,k\in\N$).  In particular, this is interesting if $k+\ell$ is an odd
number.

\paragraph{Inverted inequality:}
According to \person{Chebyshev}'s sum inequality (see
Theorem~\ref{thm:Chebyshev}), the inequality is inverted
if $e_k(v_i)$ and $s_\ell(v_i)$ are conversely ordered.
For instance, this would be applicable for $k=\ell=1$ if for each
vertex either the in-degree%
\index{in-degree}\index{degree!in-degree} or the out-degree%
\index{out-degree}\index{degree!out-degree} is equal to~$1$ and the
other one is greater or equal to~$1$.  Another example would be the
class of graphs where all vertices have the same sum of the in-degree
and the out-degree (that is, the same total degree).

\paragraph{Sum-symmetric matrices:}
From Theorem~\ref{thm:Chebyshev_for_rowcolsums}, we obtain a special
case if the row sums and the column sums of a matrix are similarly
ordered.  This happens, for example, in the case of sum-symmetric
matrices, i.e., if $\rowsum_i(A)=\colsum_i(A)$ for all~$i\in [n]$.
\begin{corollary}
  For any sum-symmetric matrix~$A$%
  \index{sum-symmetric matrix}\index{matrix!sum-symmetric},
  we have
  \[ \entrysum(A)^2 \le n\cdot\entrysum(A^2)\enspace. \]
\end{corollary}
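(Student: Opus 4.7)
The plan is to derive this corollary as an immediate specialization of Theorem~\ref{thm:Chebyshev_for_rowcolsums} to the case $k=\ell=1$. In that setting the theorem states
\[
\entrysum(A)\cdot\entrysum(A)\;\le\;n\cdot\entrysum(A^2),
\]
provided the vectors $\colsum^{[1]}=(\colsum_1(A),\ldots,\colsum_n(A))$ and $\rowsum^{[1]}=(\rowsum_1(A),\ldots,\rowsum_n(A))$ are similarly ordered. So the entire task reduces to verifying this ordering hypothesis for sum-symmetric matrices.

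By the very definition of sum-symmetry, $\rowsum_i(A)=\colsum_i(A)$ for every $i\in[n]$, meaning that the column-sum vector and the row-sum vector coincide. Two identical $n$-tuples $(a_i)$ and $(a_i)$ are trivially similarly ordered, since $(a_i-a_k)(a_i-a_k)=(a_i-a_k)^2\ge 0$ for all $i,k\in[n]$. Hence the hypothesis of Theorem~\ref{thm:Chebyshev_for_rowcolsums} is satisfied, and the desired inequality follows at once.

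There is essentially no obstacle here: the work has already been done in Theorem~\ref{thm:Chebyshev_for_rowcolsums}, where the identity $\entrysum(A^{k+\ell})=\sum_i \colsum_i^{[k]}\rowsum_i^{[\ell]}$ was combined with Chebyshev's sum inequality. The only point worth emphasizing in the write-up is that sum-symmetry is strictly stronger than what is needed; any matrix whose row-sum and column-sum sequences happen to be similarly ordered (for instance, after a common permutation becoming both nonincreasing) would suffice. A short remark could also note that symmetric matrices and, more generally, adjacency matrices of Eulerian digraphs are automatically sum-symmetric, so the corollary recovers the inequality $w_1^2\le n\cdot w_2$ for such graphs without any further ordering assumption.
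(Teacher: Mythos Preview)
Your proof is correct and matches the paper's own derivation: the corollary is obtained exactly by specializing Theorem~\ref{thm:Chebyshev_for_rowcolsums} to $k=\ell=1$ and observing that sum-symmetry makes the row-sum and column-sum vectors identical, hence trivially similarly ordered. The paper additionally remarks that the same inequality follows from Cauchy's inequality via $\entrysum(A)^2=(\sum_i\rowsum_i)^2\le n\sum_i\rowsum_i^2=n\sum_i\rowsum_i\colsum_i=n\cdot\entrysum(A^2)$, which you may want to mention as an alternative one-line argument.
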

Note that this corollary also follows from
Cauchy's inequality:
\[
\entrysum(A)^2
\ =\ \left(\sum_{i=1}^n\rowsum_i\right)^2
\ \le\ n\sum_{i=1}^n\rowsum_i^2
\ =\ n\sum_{i=1}^n\rowsum_i \colsum_i
\ =\ n\cdot\entrysum(A^2)\enspace.
\]

\paragraph{Eulerian directed graphs:}
We can apply this result to directed graphs as follows.
If there is a vertex ordering which is monotonically increasing with
respect to the in- and out-degrees, then the graph obeys the
inequality $nw_2\geq w_1^2$.  For instance, this is true if the
in-degree of each vertex equals its out-degree.
%
%
\begin{corollary}
  For every Eulerian directed graph%
  \index{Eulerian graph}\index{graph!Eulerian}
  ($\forall v\in V: \din(v)=\dout(v)$),
  we have
  \[ w_1^2 \le n\cdot w_2\qquad\text{or}\qquad w_1/w_0 \le w_2/w_1\enspace. \]
\end{corollary}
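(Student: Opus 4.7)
The plan is to reduce this directly to the preceding sum-symmetric matrix corollary, applied to the adjacency matrix $A$ of $G$. The first step is to recall that for any digraph one has $\rowsum_i(A)=\dout(v_i)$ and $\colsum_i(A)=\din(v_i)$, so the Eulerian hypothesis $\din(v)=\dout(v)$ for every vertex says exactly that $\rowsum_i(A)=\colsum_i(A)$ for all~$i$, i.e., $A$ is sum-symmetric.

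The second step is the dictionary between matrix entry sums and walk counts. Since $(A^k)_{ij}$ counts walks of length~$k$ from $v_i$ to $v_j$, summing over all pairs gives $\entrysum(A)=w_1$ and $\entrysum(A^2)=w_2$. Invoking the sum-symmetric matrix corollary then yields
\[ w_1^2=\entrysum(A)^2\le n\cdot\entrysum(A^2)=n\cdot w_2, \]
which is the first form of the claim. For the equivalent ratio form $w_1/w_0\le w_2/w_1$, I would divide through by $w_0 w_1=n w_1$, noting that this is legitimate as soon as $w_1>0$; in the edgeless case both sides of the original inequality are zero and there is nothing to prove.

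I do not expect any real obstacle, since the substantive content sits in Theorem~\ref{thm:Chebyshev_for_rowcolsums} and its sum-symmetric specialization, and the argument here is essentially a translation between matrix quantities and walk-counting quantities. The only point that merits a sentence of care is pinning down the row/column-sum versus in/out-degree convention, because a transposition there would break the reduction to sum-symmetry; once the conventions are fixed, the Eulerian condition is visibly just the combinatorial incarnation of $\rowsum_i=\colsum_i$.
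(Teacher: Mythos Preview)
Your proof is correct and follows exactly the approach the paper intends: the corollary is stated immediately after the sum-symmetric matrix corollary, and the paper's surrounding text makes clear that the Eulerian case is obtained precisely by noting that $\din(v)=\dout(v)$ for all~$v$ means the adjacency matrix is sum-symmetric. Your added care about the row/column versus out/in convention and the degenerate $w_1=0$ case is appropriate but does not change the route.
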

%

\end{document}